\newtheorem{thm}{Theorem}[section]
\theoremstyle{definition}
\newtheorem{cor}[thm]{Corollary}
\newtheorem{prop}[thm]{Proposition}
\newtheorem{defn}[thm]{Definition}
\newtheorem{rem}[thm]{Remark}
\numberwithin{equation}{section}
\begin{document}
\title[Strongly $\psi $-$2$-absorbing second submodules]{Strongly $\psi $-$2$-absorbing second submodules}

\author%
{H. Ansari-Toroghy**}

\newcommand{\acr}{\newline\indent}

\address{\llap{**\,}Department of pure Mathematics\\
Faculty of mathematical
Sciences\\
University of Guilan\\
P. O. Box 41335-19141, Rasht, Iran}
\email{ansari@guilan.ac.ir}

\author{F. Farshadifar*}
\address{\llap{*\,} (Corresponding Author) Assistant Professor, Department of Mathematics, Farhangian University, Tehran, Iran.}
\email{f.farshadifar@cfu.ac.ir}

\author{S. Maleki-Roudposhti***}
\address {\llap{***\,}Department of pure Mathematics, Faculty of mathematical
Sciences, University of Guilan,
P. O. Box 41335-19141, Rasht, Iran.}%
\email{Sepidehmaleki.r@gmail.com}

\begin{abstract}
Let  $R$ be a commutative ring with identity and $M$ be an $R$-module. Let $\psi : S(M)\rightarrow S(M) \cup \{\emptyset \}$  be a function,  where $S(M)$ denote the set of all submodules of $M$.
The main purpose of this paper is to introduce and investigate the notion of strongly $\psi $-2-absorbing second submodules of $M$ as a generalization of strongly 2-absorbing second and $\psi $-second submodules of $M$.
\end{abstract}

\subjclass[2000]{13C13, 13C05}%
\keywords {Second submodule, strongly $2$-absorbing second submodule, strongly $\psi $-$2$-absorbing second submodule}

\maketitle
\section{Introduction}
\noindent
Throughout this paper, $R$ will denote a commutative ring with
identity and $\Bbb Z$  will denote the ring of integers. We will denote the set of ideals of $R$ by $S(R)$ and the set of all submodules of $M$ by $S(M)$, where $M$ is an $R$-module.

Let $M$ be an $R$-module. A proper submodule $P$ of $M$ is said to be \emph{prime} if for any $r \in R$ and $m \in M$ with $rm \in P$, we have $m \in P$ or $r \in (P:_RM)$ \cite{Da78}. A non-zero submodule $S$ of $M$ is said to be \emph{second} if for each $a \in R$,  the endomorphism of $M$ given by multiplication by $a$ is either surjective
or zero \cite{Y01}.

Let $\phi: S(R)\rightarrow S(R) \cup \{\emptyset \}$ be a function. 
Anderson and Bataineh in \cite{AB08} defined the notation of $\phi$-prime ideals as follows: a proper ideal $P$ of $R$ is \textit{$\phi$-prime} if for r$, s \in R$, $rs \in P \setminus \phi(P)$ implies that $r \in P$ or $s \in  P$ \cite{AB08}.
In \cite{Za10}, the author extended this concept to prime submodule. Let $M$ be an $R$-module. For a function $\phi : S(M)\rightarrow S(M) \cup \{\emptyset \}$, a proper submodule $N$ of $M$ is called \textit{$\phi$-prime} if whenever $r \in R$ and $x \in M$
with $rx \in N \setminus \phi(N)$, then $r \in (N :_R M)$ or $x \in N$.

Let $M$ be an $R$-module and $\psi: S(M) \rightarrow S(M) \cup \{\emptyset \}$ be a function.
Farshadifar and Ansari-Toroghy  in \cite{AF023},  defined the notation of $\psi$-second submodules of $M$ as a dual notion of  $\phi$-prime submodules of $M$. A non-zero submodule
$N$ of $M$ is said to be a \textit{$\psi$-second submodule of $M$} if $r \in R$, $K$ a submodule of $M$, $rN\subseteq K$,  and $r\psi(N) \not \subseteq K $, then $N \subseteq K$ or $rN=0$ \cite{AF023}.

The concept of $2$-absorbing ideals was introduced in \cite{Ba07}.  A proper ideal $I$ of $R$ is said to be a \emph{2-absorbing ideal} of $R$ if whenever $a, b, c \in R$ and $abc \in I$, then $ab \in I$ or
 $ac \in I$ or $bc \in I$. 
  
In \cite{AF16}, the authors introduced the notion of strongly 2-absorbing second submodules as a dual notion of $2$-absorbing submodules and investigated some properties of this class of modules.
A non-zero submodule $N$ of $M$ is said to be a \emph{strongly 2-absorbing second submodule of} $M$ if whenever  $a, b \in R$, $K$ is a submodule of $M$,
and $abN\subseteq K$, then $aN\subseteq K$ or $bN\subseteq K$ or $ab \in Ann_R(N)$
\cite{AF16}.

Let $M$ be an $R$-module and $\psi: S(M) \rightarrow S(M) \cup \{\emptyset \}$ be a function.
The main purpose of this paper is to introduce and investigate the notion of strongly $\psi $-2-absorbing second submodules of $M$ as a generalization of strongly 2-absorbing second and $\psi $-second submodules of $M$.

\section{Main results}
\noindent
\begin{defn}\label{d12.1}
Let $M$ be an $R$-module, $S(M)$ be the set of all
submodules of $M$,   $\psi: S(M) \rightarrow S(M) \cup \{\emptyset \}$ be a function. We say that a non-zero submodule
$N$ of $M$ is a \textit{strongly $\psi $-$2$-absorbing second submodule of $M$} if $a, b \in R$, $K$ a submodule of $M$, $abN\subseteq K$,  and $ab\psi(N) \not \subseteq  K$, then  $a N\subseteq K$ or  $b N\subseteq K$ or  $ab \in Ann_R(N)$.
\end{defn}

In Definition \ref{d12.1}, since $ab\psi(N) \not \subseteq  K$ implies that $ab(\psi(N)+N)\not \subseteq  K$, there is no loss of generality in assuming that $N \subseteq \psi(N)$ in the rest of this paper.

A non-zero submodule $N$ of $M$ is said to be a \emph{weakly strongly 2-absorbing second submodule of} $M$ if whenever
 $a, b \in R$, $K$ is a submodule of $M$, $abM \not \subseteq K$, and $abN\subseteq K$, then $aN\subseteq K$ or
$bN\subseteq K$ or $ab \in Ann_R(N)$ \cite{AFM22}.

Let $M$ be an $R$-module. We use the following functions $\psi: S(M) \rightarrow S(M) \cup \{\emptyset \}$.
$$\psi_i(N)=(N:_MAnn_R^i(N)), \ \forall N \in S(M), \ \forall i \in \Bbb N,$$
$$\psi_\sigma(N)=\sum^{\infty}_{i=1}\psi_i(N), \ \forall N \in S(M).$$
$$\psi_{M}(N)=M, \ \forall N \in S(M),$$
Then it is clear that strongly $\psi_M$-2-absorbing second submodules are weakly strongly 2-absorbing second submodules. Clearly,  for any submodule and every positive integer $n$, we have the
following implications:
$$
strongly \\\ 2-absorbing\\\ second \Rightarrow strongly\\\ \psi_{n-1}-2-absorbing\\\ second
$$
$$
 \Rightarrow strongly\\\ \psi_{n}-2-absorbing\\\ second \Rightarrow strongly\\\  \psi_\sigma-2-absorbing\\\ second.
$$

For functions $\psi, \theta: S(M) \rightarrow S(M) \cup \{\emptyset \}$, we write $\psi \leq \theta $  if $\psi(N) \subseteq \theta (N)$ for each $N \in S(M)$. So whenever $\psi \leq \theta$, any strongly $\psi$-$2$-absorbing second submodule is a strongly $\theta$-$2$-absorbing second submodule.

\begin{rem}
Let $M$ be an $R$-module and $\psi: S(M) \rightarrow S(M) \cup \{\emptyset \}$ be a function. Clearly every strongly 2-absorbing second submodule and every $\psi$-second submodule of $M$ is a strongly $\psi$-$2$-absorbing second submodule of $M$. Also, evidently $M$ is a strongly $\psi_{M}$-2-absorbing second submodule of itself. In particular,   $M=\Bbb Z_6 \oplus \Bbb Z_{10}$ is not strongly 2-absorbing second $\Bbb Z$-module but $M$ is a strongly $\psi_{M}$-2-absorbing second $\Bbb Z$-submodule of $M$.
\end{rem}

In the following theorem, we characterize
strongly $\psi$-2-absorbing second submodules of an $R$-module $M$.
\begin{thm}\label{t2.7}
Let $N$ be a non-zero submodule of an $R$-module $M$ and  $\psi: S(M) \rightarrow S(M) \cup \{\emptyset \}$  be a function.
Then the following are equivalent:
\begin{itemize}
\item [(a)] $N$ is a strongly $\psi$-2-absorbing second submodule of $M$;
\item [(b)] for submodule $K$ of $M$ with $aN \not \subseteq K$ and $a \in R$, we have $(K:_RaN)=Ann_R(aN) \cup (K:_RN)\cup (K:_Ra\psi(N))$;
\item [(c)] for submodule $K$ of $M$ with $aN \not \subseteq K$ and $a \in R$, we have either $(K:_RaN)=Ann_R(aN)$ or $(K:_RaN)=(K:_RN)$ or $(K:_RaN)=(K:_Ra\psi(N))$;
\item [(d)] for each $a, b \in R$ with $ab\psi(N) \not \subseteq abN$, we have either $abN=aN$ or $abN=bN$ or $abN=0$.
\end{itemize}
\end{thm}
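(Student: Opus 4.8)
The plan is to run the cycle $(a)\Rightarrow(b)\Rightarrow(c)\Rightarrow(a)$ and to handle $(a)\Leftrightarrow(d)$ as a separate, self-contained argument, using throughout the standing assumption $N\subseteq\psi(N)$ (so that $aN\subseteq N\subseteq\psi(N)$ and $aN\subseteq a\psi(N)$ for every $a\in R$). First I would record the ``free'' inclusions: for a fixed $a\in R$ and a submodule $K$ with $aN\not\subseteq K$, each of the ideals $Ann_R(aN)$, $(K:_RN)$ and $(K:_Ra\psi(N))$ is contained in $(K:_RaN)$ --- the first because $0\subseteq K$, the second because $aN\subseteq N$, the third because $aN\subseteq a\psi(N)$. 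Hence in both (b) and (c) only the reverse (``$\subseteq$'') inclusion carries genuine content.

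For $(a)\Rightarrow(b)$ I would take an arbitrary $r\in(K:_RaN)$ and specialise Definition \ref{d12.1} with $b:=r$, so that $abN=raN\subseteq K$. If $ab\psi(N)\subseteq K$ then $r\in(K:_Ra\psi(N))$; otherwise the definition applies and yields $aN\subseteq K$, $rN\subseteq K$, or $ar\in Ann_R(N)$. The first is excluded by $aN\not\subseteq K$, while the other two say precisely $r\in(K:_RN)$ or $r\in Ann_R(aN)$. This ``$b=r$'' substitution is the crux of the direction and is otherwise routine.

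The arrows leading \emph{into} (a) need no avoidance lemma and are equally direct. For $(c)\Rightarrow(a)$ (and identically for $(b)\Rightarrow(a)$, which incidentally shows the union step below is needed only to \emph{produce} (c)), given $a,b,K$ with $abN\subseteq K$ and $ab\psi(N)\not\subseteq K$, I would assume $aN\not\subseteq K$ and note $b\in(K:_RaN)$; then the trichotomy of (c) (respectively the union in (b)) forces $b$ into $Ann_R(aN)$, into $(K:_RN)$, or into $(K:_Ra\psi(N))$, giving $ab\in Ann_R(N)$, $bN\subseteq K$, or the contradiction $ab\psi(N)\subseteq K$. Similarly $(a)\Leftrightarrow(d)$ comes from the substitution $K:=abN$: since $abN\subseteq aN\cap bN$, the hypothesis $ab\psi(N)\not\subseteq abN$ together with the definition reads off $aN=abN$, $bN=abN$, or $abN=0$; conversely any instance with $abN\subseteq K$ and $ab\psi(N)\not\subseteq K$ forces $ab\psi(N)\not\subseteq abN$, so (d) may be invoked and each of its outcomes feeds back into the definition.

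The one delicate step, which I expect to be the main obstacle, is $(b)\Rightarrow(c)$: here one must upgrade the statement that the ideal $(K:_RaN)$ equals the \emph{union} $Ann_R(aN)\cup(K:_RN)\cup(K:_Ra\psi(N))$ to the conclusion that it equals \emph{one} of the three. This is the familiar ``an ideal which is a union of ideals is one of them'' phenomenon; for a union of two ideals it is elementary, since an additive group is never the union of two proper subgroups, but the three-fold case is exactly the subtle point and is where I would lean on the relevant ideal-union lemma, isolating it as a preliminary. Every implication except this one is purely formal, so the whole weight of the equivalence with (c) rests on correctly passing from set-theoretic equality in (b) to the sharper case split in (c).
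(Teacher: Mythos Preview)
Your proposal is correct and tracks the paper's argument closely; the only organisational difference is that the paper runs the single cycle $(a)\Rightarrow(b)\Rightarrow(c)\Rightarrow(d)\Rightarrow(a)$, so your direct $(c)\Rightarrow(a)$ and separate $(a)\Leftrightarrow(d)$ are replaced there by $(c)\Rightarrow(d)$ (taking $K=abN$, exactly your substitution) followed by $(d)\Rightarrow(a)$. On the one step you single out as delicate, $(b)\Rightarrow(c)$, the paper is in fact \emph{less} careful than you: it disposes of it in one line by invoking the fact that an ideal equal to a union of \emph{two} ideals must equal one of them, with no indication of how the three-term union $Ann_R(aN)\cup(K:_RN)\cup(K:_Ra\psi(N))$ is to be reduced to two. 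Your instinct that this passage carries the real weight is therefore sound, and the paper does not supply the missing reduction either.
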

\begin{proof}
$(a)\Rightarrow (b)$. Let for a submodule $K$ of $M$ with $aN \not \subseteq K$ and $a \in R$, we have $b \in(K:_RaN)\setminus (K:_Ra\psi(N))$. Then since $N$ is a strongly $\psi$-2-absorbing second submodule of $M$, we have $b \in Ann_R(aN)$ or $bN  \subseteq K$. Thus  $(K:_RaN)\subseteq Ann_R(aN)$ or $(K:_RaN) \subseteq K:_RN)$. Hence, 
$$
(K:_RaN)\subseteq Ann_R(aN) \cup (K:_RN)\cup (K:_Ra\psi(N)).
$$
As we may assume that
$N\subseteq \psi(N) $, the other inclusion always holds.

$(b)\Rightarrow (c)$. This follows from the fact that if an ideal is the union of two ideals, it is equal to one of them.

$(c)\Rightarrow (d)$.
Let $a,b \in R$ such that $ab\psi(N) \not \subseteq abN$ and $aN \not \subseteq abN$. Then by part (c),
we have either $(abN:_RaN)=Ann_R(aN)$ or $(abN:_RaN)=(abN:_RN)$. Hence,  $abN=0$ or $bN\subseteq abN$, as needed.

$(d)\Rightarrow (a)$.
Let $a,b \in R$ and $K$ be a submodule of $M$ such that $abN \subseteq K$ and $ab\psi(N) \not \subseteq K$. If $ab\psi(N) \subseteq abN$, then $abN \subseteq K$ implies that $ab\psi(N) \subseteq K$, a contradiction. Thus by part (d), either $abN=aN$ or $abN=bN$ or $abN=0$. Therefore, $aN \subseteq K$ or $bN \subseteq K$ or $abN=0$ and the proof is completed.
\end{proof}

A proper submodule $N$ of an $R$-module
$M$ is said to be \emph{completely irreducible} if $N=\bigcap _
{i \in I}N_i$, where $ \{ N_i \}_{i \in I}$ is a family of
submodules of $M$, implies that $N=N_i$ for some $i \in I$. It is
easy to see that every submodule of $M$ is an intersection of
completely irreducible submodules of $M$ \cite{FHo06}.

\begin{rem}\label{r2.1} (See \cite{AF101}.)
Let $N$ and $K$ be two submodules of an $R$-module $M$. To prove $N\subseteq K$, it is enough to show that if $L$ is a completely irreducible submodule of $M$ such that $K\subseteq L$, then $N\subseteq L$.
\end{rem}

\begin{thm}\label{t2.3}
Let $M$ be an $R$-module and  $\psi: S(M) \rightarrow S(M) \cup \{\emptyset \}$ be a function.
Let $N$ be a strongly  $\psi$-2-absorbing second submodule of $M$ such that  $Ann_R^2(N) \psi(N)\not\subseteq N $. Then $N$ is a strongly 2-absorbing second submodule submodule of $M$.
\end{thm}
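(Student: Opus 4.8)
The plan is to reduce the assertion to the purely submodule-theoretic criterion that, for every $a,b\in R$, one of the equalities $abN=aN$, $abN=bN$, $abN=0$ holds; call this property $(\star)$. This suffices: if $abN\subseteq K$ for some submodule $K$, then $(\star)$ forces $aN=abN\subseteq K$, or $bN=abN\subseteq K$, or $abN=0$ (i.e. $ab\in Ann_R(N)$), which is exactly the defining condition of a strongly $2$-absorbing second submodule. So the entire proof comes down to verifying $(\star)$ for an arbitrary pair $a,b\in R$, and the only tool I intend to use is the equivalence $(a)\Leftrightarrow(d)$ of Theorem \ref{t2.7}.

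Fix $a,b\in R$. If $ab\psi(N)\not\subseteq abN$, then Theorem \ref{t2.7}(d) applies directly and yields $(\star)$ at once. The genuine obstacle is the complementary case $ab\psi(N)\subseteq abN$, where part (d) is silent because its hypothesis fails and the strongly $\psi$-$2$-absorbing property gives no control over the pair $a,b$. This is precisely the place where the extra assumption $Ann_R^2(N)\psi(N)\not\subseteq N$ must be brought in.

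To exploit that assumption I would first extract witnesses: since every element of $Ann_R^2(N)$ is a finite sum of products $cd$ with $c,d\in Ann_R(N)$, expanding a single element of $Ann_R^2(N)\psi(N)$ lying outside $N$ and using that $N$ is a submodule produces one pair $c,d\in Ann_R(N)$ with $cd\psi(N)\not\subseteq N$. Now perturb the pair to $(a+c,\,b+d)$. Because $c,d$ annihilate $N$, every cross term vanishes and $(a+c)(b+d)N=abN$, $(a+c)N=aN$, $(b+d)N=bN$; on the other hand $cd\psi(N)\subseteq(a+c)(b+d)\psi(N)$, and since $abN\subseteq N$ the relation $cd\psi(N)\not\subseteq N$ upgrades to $cd\psi(N)\not\subseteq abN$, so that $(a+c)(b+d)\psi(N)\not\subseteq(a+c)(b+d)N$. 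Applying Theorem \ref{t2.7}(d) to the pair $(a+c,b+d)$ then returns $(a+c)(b+d)N=(a+c)N$ or $=(b+d)N$ or $=0$, that is, $abN=aN$ or $abN=bN$ or $abN=0$, which establishes $(\star)$ in the remaining case and completes the argument.

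The crux, and the step I expect to be most delicate, is this perturbation $(a,b)\mapsto(a+c,b+d)$: one must check simultaneously that it leaves every product against $N$ unchanged while pushing the product against $\psi(N)$ out of $abN$, the latter resting entirely on the inclusion $abN\subseteq N$ to convert ``$\not\subseteq N$'' into ``$\not\subseteq abN$.'' Once the witnesses $c,d$ are in hand, everything else is routine bookkeeping.
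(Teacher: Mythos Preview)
Your reduction to the property $(\star)$ via Theorem \ref{t2.7}(d) is a genuine simplification over the paper's direct argument with an arbitrary $K$, and the perturbation idea is the right one. However, there is a real gap in the final step: the inclusion $cd\psi(N)\subseteq(a+c)(b+d)\psi(N)$ that you invoke is false in general. The right-hand side consists of elements of the form $(ab+ad+cb+cd)x$ with $x\in\psi(N)$, and there is no reason a given $cdx$ should be of this shape. What you actually need is the non-inclusion $(a+c)(b+d)\psi(N)\not\subseteq abN$, and for this it would suffice to know that each of $ab\psi(N)$, $ad\psi(N)$, $cb\psi(N)$ lies in $abN$ (so that the $cd$-term is the only possible obstruction, and one can subtract the other three termwise). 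The first of these is your standing assumption, but the two cross terms are not controlled by anything you have written.

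The fix is an extra layer of case analysis, exactly parallel to what the paper does. Before choosing $c,d$, first ask whether $a\,Ann_R(N)\,\psi(N)\subseteq abN$. If not, pick $d'\in Ann_R(N)$ with $ad'\psi(N)\not\subseteq abN$; then $a(b+d')\psi(N)\not\subseteq abN=a(b+d')N$ (subtracting $abx\in abN$ termwise), and Theorem \ref{t2.7}(d) applied to $(a,b+d')$ finishes this subcase. Symmetrically for $b\,Ann_R(N)\,\psi(N)$. Only in the residual situation where both $a\,Ann_R(N)\,\psi(N)\subseteq abN$ and $b\,Ann_R(N)\,\psi(N)\subseteq abN$ do the cross terms $ad\psi(N)$ and $cb\psi(N)$ land in $abN$, and then your double perturbation $(a+c,b+d)$ goes through. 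The paper carries out essentially this same cascade of subcases (working with a general $K$ and an auxiliary completely irreducible submodule $L$), so once repaired your argument is a cleaner variant of theirs.
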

\begin{proof}
Let $a,b \in R$ and $K$ be a submodule of $M$ such that $abN \subseteq K$. If $ab\psi(N) \not \subseteq K$, then we are done because $N$ is a strongly $\psi$-2-absorbing second submodule of $M$. Thus suppose that $ab\psi(N)\subseteq K$. If $ab\psi(N) \not \subseteq N$, then $ab\psi(N) \not \subseteq N \cap K$. Hence $ab N \subseteq N \cap K$ implies that $aN \subseteq N \cap K \subseteq K$ or  $bN \subseteq N \cap K \subseteq K$ or $abN=0$, as needed. So let $ab \psi(N) \subseteq N$. If $aAnn_R(N) \psi(N) \not \subseteq K$, then $a(b+Ann_R(N))\psi(N) \not \subseteq K$. Thus $a(b+Ann_R(N))N \subseteq K$ implies that $aN \subseteq K$ or $bN=(b+Ann_R(N))N \subseteq K$ or $abN=a(b+Ann_R(N))N=0$, as required. So let $aAnn_R(N)\psi(N)\subseteq K$. Similarly, we can assume that $bAnn_R(N)\psi(N) \subseteq K$. Since $Ann_R^2(N)  \psi(N)\not \subseteq N$, there exist $a_1, b_1 \in Ann_R(N)$ such that $a_1b_1 \psi(N)  \not \subseteq N$. Thus there exists a completely irreducible submodule $L$ of $M$ such that $N \subseteq L$ and $a_1b_1\psi(N)  \not \subseteq L$ by Remark \ref{r2.1}. If $ab_1\psi(N) \not\subseteq L$, then $a(b+b_1)\psi(N)  \not \subseteq L \cap K$. Thus $a(b+b_1)N \subseteq L \cap K$ implies that $aN \subseteq  L \cap K \subseteq K$ or $bN=(b+b_1)N \subseteq L \cap K \subseteq K$ or $abN=a(b+b_1)N=0$, as needed. So let $ab_1 \psi(N)  \subseteq L$. Similarly, we can assume that $a_1b \psi(N)  \subseteq L$. Therefore, $(a+a_1)(b+b_1)\psi(N)  \not \subseteq L \cap K$. Hence, $(a+a_1)(b+b_1)N \subseteq L \cap K$ implies that $aN=(a+a_1)N \subseteq K$ or $bN=(b+b_1)N \subseteq K$ or $abN=(a+a_1)(b+b_1)N=0$, as desired.
\end{proof}

Let $M$ be an $R$-module. A submodule $N$ of $M$ is said to be \emph{coidempotent} if  $N=(0:_MAnn_R^2(N))$. Also, $M$ is said to be  \emph{fully coidempotent}  if every submodule of $M$ is coidempotent \cite{AF122}.
\begin{cor}
Let $M$ be an R-module and  $\psi: S(M) \rightarrow S(M) \cup \{\emptyset \}$ be a function.
If $M$ is a fully coidempotent $R$-module and $N$ is a proper submodule of $M$ with $Ann_R(\psi(N))=0$, then $N$ is a strongly  $\psi$-2-absorbing second submodule if and only if $N$ is a strongly 2-absorbing second submodule.
\end{cor}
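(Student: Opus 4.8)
The plan is to prove the two implications separately, with essentially all the content living in one direction and all the hypotheses funneled through Theorem~\ref{t2.3}. For the ``if'' direction there is nothing to do beyond invoking the remark recorded just after Definition~\ref{d12.1}: every strongly $2$-absorbing second submodule of $M$ is a strongly $\psi$-$2$-absorbing second submodule for \emph{any} function $\psi$, and this uses none of the standing hypotheses.

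For the ``only if'' direction I would reduce everything to Theorem~\ref{t2.3}. Since $N$ is assumed strongly $\psi$-$2$-absorbing second, it suffices to verify the single condition $Ann_R^2(N)\,\psi(N)\not\subseteq N$, after which Theorem~\ref{t2.3} immediately gives that $N$ is strongly $2$-absorbing second. The first key step is to identify $\psi(N)$: because $M$ is fully coidempotent, every submodule $L$ satisfies $L=(0:_M Ann_R^2(L))$, and applying this to $L=\psi(N)$ together with $Ann_R(\psi(N))=0$ yields $\psi(N)=(0:_M 0)=M$. Thus $\psi(N)=M$ and in particular $M$ is faithful. Writing $I:=Ann_R(N)$, the target statement $Ann_R^2(N)\,\psi(N)\not\subseteq N$ becomes $I^2M\not\subseteq N$.

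The real work is to establish $I^2M\not\subseteq N$, which I would do by contradiction, using coidempotence twice. First I record the auxiliary fact that any coidempotent submodule $L$ satisfies the \emph{unsquared} identity $L=(0:_M Ann_R(L))$: the inclusion $L\subseteq(0:_M Ann_R(L))$ is immediate from $Ann_R(L)\,L=0$, while $Ann_R^2(L)\subseteq Ann_R(L)$ forces $(0:_M Ann_R(L))\subseteq(0:_M Ann_R^2(L))=L$. Now suppose $I^2M\subseteq N$. Applying the identity to $N$ gives $N=(0:_M I)$, so $I^2M\subseteq(0:_M I)$ yields $I^3M=0$ and hence $I^3=0$ by faithfulness of $M$. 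The decisive step is to feed the auxiliary submodule $IM$ into full coidempotence: from $I^3=0$ we get $I^2\subseteq Ann_R(IM)$, whence $IM=(0:_M Ann_R(IM))\subseteq(0:_M I^2)=N=(0:_M I)$. This forces $I^2M=0$, so $I^2=0$ again by faithfulness, and finally $N=(0:_M Ann_R^2(N))=(0:_M 0)=M$, contradicting the properness of $N$. Therefore $I^2M\not\subseteq N$, and Theorem~\ref{t2.3} closes the argument.

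I expect the obstacle to be precisely this last contradiction. The naive estimate $I^2M\subseteq N=(0:_M Ann_R^2(N))$ only delivers $I^4=0$, which is too weak to collapse $N$ onto $M$. Two devices are what make the argument run: replacing the squared coidempotence identity by the sharper unsquared identity $N=(0:_M I)$, which tightens the nilpotence bounds, and applying full coidempotence not to $N$ itself but to the auxiliary submodule $IM$, which converts the nilpotence $I^3=0$ into the containment $IM\subseteq N$ and ultimately into $I^2=0$. It is here that the hypothesis of \emph{full} coidempotence (rather than mere coidempotence of $N$) is genuinely used.
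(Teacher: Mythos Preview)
Your proof is correct and follows the same overall strategy as the paper: argue by contradiction via the contrapositive of Theorem~\ref{t2.3}, deduce that $Ann_R(N)^3$ kills $\psi(N)$ and hence vanishes, and then use full coidempotence to force $N=M$. The paper's version is terser: it does not identify $\psi(N)=M$ and simply asserts $N=(0:_MAnn_R^2(N))=(0:_MAnn_R^3(N))=M$, whereas your detour through the unsquared identity $N=(0:_MAnn_R(N))$ and the coidempotence of the auxiliary submodule $IM$ supplies exactly the justification that equality needs.
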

\begin{proof}
The sufficiency is clear. Conversely, assume on the contrary that $N\not = M$ is a strongly $\psi$-2-absorbing second submodule of $M$ which is not a strongly 2-absorbing second submodule. Then by Theorem \ref{t2.3}, $Ann_R^3(N) \subseteq Ann_R(\psi(N))$. Hence as $Ann_R(\psi(N))=0$, we have $Ann_R^3(N) =0$. Thus since $N$ is coidempotent,
$$
N=(0:_MAnn_R^2(N))=(0:_MAnn_R^3(N))=M,
$$
which is a contradiction.
\end{proof}

\begin{prop}\label{p1.15}
 Let $M$ be an $R$-module and $\psi: S(M) \rightarrow S(M) \cup \{\emptyset \}$ be a function. Let $N$ be a non-zero submodule of $M$. If $N$ is a strongly
$\psi$-$2$-absorbing second submodule of $M$, then for any $a, b \in  R\setminus Ann_R(N)$, we have $abN=aN \cap bN \cap ab\psi(N)$.
\end{prop}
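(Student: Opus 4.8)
The plan is to prove the two inclusions separately. The inclusion $abN \subseteq aN \cap bN \cap ab\psi(N)$ is the routine one: since $N$ is a submodule we have $bN \subseteq N$ and $aN \subseteq N$, whence $abN = a(bN) \subseteq aN$ and $abN = b(aN) \subseteq bN$; moreover, under the standing convention $N \subseteq \psi(N)$ we get $abN \subseteq ab\psi(N)$. Intersecting the three gives the desired containment, and this part uses nothing about $N$ being strongly $\psi$-$2$-absorbing second.

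For the reverse inclusion I would argue by cases on the position of $ab\psi(N)$ relative to $abN$, invoking the characterization in Theorem \ref{t2.7}. If $ab\psi(N) \subseteq abN$, then $aN \cap bN \cap ab\psi(N) \subseteq ab\psi(N) \subseteq abN$ and we are done. Otherwise $ab\psi(N) \not\subseteq abN$, and Theorem \ref{t2.7}(d) applies to the pair $a,b$, yielding $abN = aN$, or $abN = bN$, or $abN = 0$. In the first case $aN \cap bN \cap ab\psi(N) \subseteq aN = abN$, and symmetrically in the second, so equality holds in both.

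The remaining case $abN = 0$ (equivalently $ab \in Ann_R(N)$, while $ab\psi(N) \neq 0$) is where I expect the real work, since here the defining property returns the trivial alternative $ab \in Ann_R(N)$ and gives no control over $aN$ or $bN$. To force the triple intersection to vanish I would pass to completely irreducible submodules via Remark \ref{r2.1}: supposing some $0 \neq x \in aN \cap bN \cap ab\psi(N)$, choose a completely irreducible $L$ with $x \notin L$; then automatically $aN \not\subseteq L$ and $bN \not\subseteq L$, so Theorem \ref{t2.7}(b),(c) applies with $K = L$ to both $a$ and $b$. Since $abN = 0 \subseteq L$ we have $b \in (L:_R aN)$ and $a \in (L:_R bN)$, and the trichotomy of (c) then tends to place $b \in (L:_R N)$ or $b \in (L:_R a\psi(N))$, each of which puts $x \in L$, a contradiction.

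The main obstacle is exactly this last step: ruling out the possibility that both colon ideals collapse onto their annihilator pieces, that is $(L:_R aN) = Ann_R(aN)$ and $(L:_R bN) = Ann_R(bN)$ simultaneously, a configuration consistent with $abN = 0$ that the property alone does not immediately break. Resolving it should require combining the two applications of Theorem \ref{t2.7}(c) and exploiting the hypotheses $a,b \notin Ann_R(N)$ together with $N \subseteq \psi(N)$, in the spirit of the annihilator-translation arguments used in the proof of Theorem \ref{t2.3}; this is the delicate heart of the proof, whereas every other case is essentially formal.
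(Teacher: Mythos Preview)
Your approach takes a detour through Theorem \ref{t2.7}(d) and then lands on a case you cannot close. The paper's argument is shorter and sidesteps your obstacle entirely, but for a reason you may have overlooked in reading the statement against the proof.

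The paper proves the reverse inclusion directly via Remark \ref{r2.1}: for any completely irreducible $L$ with $abN \subseteq L$, either $ab\psi(N) \subseteq L$ (so the triple intersection lies in $L$ trivially), or $ab\psi(N) \not\subseteq L$, in which case the \emph{definition} of strongly $\psi$-$2$-absorbing second yields $aN \subseteq L$ or $bN \subseteq L$ (so again the intersection lies in $L$). The third alternative $ab \in Ann_R(N)$ is dismissed because the proof opens with ``$ab \in R\setminus Ann_R(N)$''---note $ab$, not $a,b$ separately. Whether that is the intended hypothesis (with a typo in the statement) or a slip in the proof, the paper does not treat the case $abN=0$ you flagged as the ``delicate heart''; it simply excludes it.

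Under the literal hypothesis $a,b \in R\setminus Ann_R(N)$ your obstacle is genuine: nothing in the definition or in Theorem \ref{t2.7} forces $aN \cap bN \cap ab\psi(N)=0$ when $abN=0$, and the annihilator-translation tricks from Theorem \ref{t2.3} are unavailable here since there is no standing assumption like $Ann_R^2(N)\psi(N)\not\subseteq N$ to leverage. The clean resolution is to read the hypothesis as $ab\notin Ann_R(N)$; once you do, the paper's two-line argument via completely irreducible submodules suffices and your case analysis through Theorem \ref{t2.7}(d) becomes unnecessary.
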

\begin{proof}
Let $N$ be a strongly $\psi$-$2$-absorbing second submodule of $M$ and $ab \in  R\setminus Ann_R(N)$.  Clearly,  $abN\subseteq aN \cap bN \cap ab\psi(N)$.  Now let $L$ be a completely irreducible submodule of $M$ such that $abN \subseteq L$. If $ab\psi(N)  \subseteq L$, then we are done.  If $ab\psi(N) \not \subseteq L$, then $aN \subseteq L$ or $bN \subseteq L$ because $N$ is a  strongly $\psi$-$2$-absorbing second submodule of $M$. Hence $aN \cap bN \cap ab\psi(N)\subseteq L$. Now the result follows from Remark \ref{r2.1}. 
\end{proof}

Let $R_i$ be a commutative ring with identity and $M_i$ be an $R_i$-module for $i = 1,   2$.   Let $R = R_1 \times R_2$.   Then $M = M_1 \times M_2$ is an $R$-module and each submodule of $M$ is in the form of $N = N_1 \times N_2$ for some submodules $N_1$ of $M_1$ and $N_2$ of $M_2$.
\begin{thm}\label{t2.5}
Let $R = R_1 \times R_2$ be a ring and $M = M_1 \times M_2$
be an $R$-module, where $M_1$ is an $R_1$-module and $M_2$ is an $R_2$-module. Suppose that $\psi^i: S(M_i) \rightarrow S(M_i) \cup \{\emptyset \}$  be a function for $i=1, 2$. Then  $N_1 \times 0$ is a strongly $\psi^1 \times \psi^2$-2-absorbing second submodule of $M$,  where $N_1$ is a  strongly $\psi^1$-2-absorbing second submodule of $M_1$ and  $\psi^2(0)=0$.
\end{thm}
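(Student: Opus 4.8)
The plan is to exploit the componentwise structure of $R=R_1\times R_2$, $M=M_1\times M_2$, and of submodules of $M$ throughout. Write $N=N_1\times 0$. First I would record that $N$ is non-zero: since $N_1$ is strongly $\psi^1$-$2$-absorbing second, it is non-zero by definition, hence so is $N_1\times 0$. Using the hypothesis $\psi^2(0)=0$, I get $(\psi^1\times\psi^2)(N)=\psi^1(N_1)\times\psi^2(0)=\psi^1(N_1)\times 0$, so that the second coordinate of $(\psi^1\times\psi^2)(N)$ is trivial. This is the observation that will drive the whole argument.

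Next I would take arbitrary $a=(a_1,a_2),\,b=(b_1,b_2)\in R$ and a submodule $K=K_1\times K_2$ of $M$, and assume $abN\subseteq K$ together with $ab(\psi^1\times\psi^2)(N)\not\subseteq K$. Computing componentwise, $abN=a_1b_1N_1\times 0$ and $ab(\psi^1\times\psi^2)(N)=a_1b_1\psi^1(N_1)\times 0$. Since the second coordinate of each is $0$ and $0\subseteq K_2$ always holds, the two hypotheses on $M$ collapse exactly to the two statements $a_1b_1N_1\subseteq K_1$ and $a_1b_1\psi^1(N_1)\not\subseteq K_1$ inside $M_1$.

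Now I would invoke the hypothesis that $N_1$ is a strongly $\psi^1$-$2$-absorbing second submodule of $M_1$, applied to $a_1,b_1\in R_1$ and $K_1\subseteq M_1$. This yields the trichotomy $a_1N_1\subseteq K_1$, or $b_1N_1\subseteq K_1$, or $a_1b_1\in Ann_{R_1}(N_1)$. Finally I would lift each outcome back to $M$: the first gives $aN=a_1N_1\times 0\subseteq K_1\times K_2=K$; the second gives $bN\subseteq K$; and the third gives $a_1b_1N_1=0$, so $abN=a_1b_1N_1\times 0=0$, i.e. $ab\in Ann_R(N)$. These are precisely the three alternatives in Definition \ref{d12.1}, so $N_1\times 0$ is a strongly $\psi^1\times\psi^2$-$2$-absorbing second submodule of $M$.

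There is essentially no serious obstacle here; the argument is a pure bookkeeping reduction to the first factor. The only point that genuinely requires the stated assumption is that the second coordinate of $ab(\psi^1\times\psi^2)(N)$ be the zero submodule, which is forced by $\psi^2(0)=0$. Without this, the failure $ab(\psi^1\times\psi^2)(N)\not\subseteq K$ could be produced by the second coordinate, and one could no longer conclude the genuine failure $a_1b_1\psi^1(N_1)\not\subseteq K_1$ in $M_1$ needed to apply the hypothesis on $N_1$. Thus $\psi^2(0)=0$ is exactly the hypothesis that keeps the reduction clean, and identifying this is the only delicate step.
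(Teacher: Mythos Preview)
Your proof is correct and follows essentially the same componentwise reduction as the paper's own argument: compute $(\psi^1\times\psi^2)(N_1\times 0)=\psi^1(N_1)\times 0$, reduce the containment and non-containment hypotheses to the first factor, apply the strongly $\psi^1$-$2$-absorbing second property of $N_1$, and lift the resulting trichotomy back to $M$. You additionally make explicit that $N_1\times 0$ is non-zero and explain why the hypothesis $\psi^2(0)=0$ is essential, both of which are welcome clarifications.
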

\begin{proof}
Let $(a_1, a_2), (b_1, b_2) \in R$ and $K_1 \times K_2$ be a submodule of $M$
such that $(a_1, a_2)(b_1, b_2)(N_1 \times 0) \subseteq K_1 \times K_2$ and
$$
(a_1, a_2)(b_1, b_2) ((\psi^1 \times \psi^2)(N_1 \times 0))=a_1b_1\psi^1(N_1)\times a_2b_2\psi^2(0)
$$
$$
=a_1b_1\psi^1(N_1)\times 0\not\subseteq K_1 \times K_2
$$
Then
 $a_1b_1N_1 \subseteq K_1 $ and $a_1b_1 \psi^1(N_1) \not\subseteq K_1$.
Hence,
$a_1b_1N_1=0$ or $a_1N_1 \subseteq K_1 $ or  $b_1N_1 \subseteq K_1 $ since $N_1$ is a strongly $\psi^1$-2-absorbing second submodule of $M_1$.
Therefore, we have  $(a_1, a_2)(b_1, b_2)(N_1 \times 0)=0 \times 0$ or $(a_1, a_2)N_1 \times 0 \subseteq K_1 \times K_2$ or $(b_1, b_2)N_1 \times 0 \subseteq K_1 \times K_2$, as requested.
 \end{proof}

 \begin{thm}\label{t2.6}
 Let $M$ be an $R$-module and  $\psi: S(M) \rightarrow S(M) \cup \{\emptyset \}$ be a function. Then we have the following.
  \begin{itemize}
    \item [(a)] If $(0:_Mt) \subseteq t \psi((0:_Mt))$,  then $(0:_Mt)$ is a strongly 2-absorbing second submodule if and only if it is a strongly $\psi$-2-absorbing second submodule.
    \item [(b)] If $(tM:_R\psi(tM)) \subseteq Ann_R(tM)$,   then the submodule $tM$ is strongly 2-absorbing second if and only if it is strongly $\psi$-2-absorbing second.
  \end{itemize}
  \end{thm}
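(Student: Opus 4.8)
\section*{Proof proposal}

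The two statements have the same shape, so I would treat them in parallel. In each part the \emph{only if} direction is automatic: by the Remark following Definition~\ref{d12.1}, every strongly $2$-absorbing second submodule is strongly $\psi$-$2$-absorbing second for \emph{any} choice of $\psi$. Hence the whole content lies in the \emph{if} direction, and the plan is to show that the extra hypothesis in each part forces a strongly $\psi$-$2$-absorbing second submodule to be genuinely strongly $2$-absorbing second.

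The device I would use is the characterization in Theorem~\ref{t2.7}(d). Writing $N$ for $(0:_Mt)$ in (a) and for $tM$ in (b), $N$ is strongly $\psi$-$2$-absorbing second exactly when, for every $a,b\in R$ with $ab\psi(N)\not\subseteq abN$, one has $abN=aN$ or $abN=bN$ or $abN=0$. On the other hand, taking $K=abN$ in the definition of strongly $2$-absorbing second and using $abN\subseteq aN$ and $abN\subseteq bN$, one checks that $N$ is strongly $2$-absorbing second precisely when the \emph{same} trichotomy holds for \emph{all} pairs $a,b$, with the side condition $ab\psi(N)\not\subseteq abN$ removed. Thus the only pairs that need attention are those with $ab\psi(N)\subseteq abN$, and for each such pair it suffices to prove $abN=0$.

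For part (a) I would exploit $tN=0$ (immediate from $N=(0:_Mt)$) together with the hypothesis $N\subseteq t\psi(N)$. For a pair with $ab\psi(N)\subseteq abN$ one has the inclusion chain
$$
abN\ \subseteq\ ab\,t\psi(N)\ =\ t\,ab\psi(N)\ \subseteq\ t\,abN\ \subseteq\ tN\ =\ 0,
$$
where the first inclusion is $N\subseteq t\psi(N)$, the middle one is the case hypothesis $ab\psi(N)\subseteq abN$, and the last uses $abN\subseteq N$; hence $abN=0$, as required. For part (b) the argument is shorter: since $abN\subseteq N$ always holds, $ab\psi(N)\subseteq abN$ gives $ab\psi(N)\subseteq N$, i.e.\ $ab\in(tM:_R\psi(tM))$, and the hypothesis $(tM:_R\psi(tM))\subseteq Ann_R(tM)$ then yields $ab\in Ann_R(N)$, that is $abN=0$.

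The routine part is the equivalence between the definition of strongly $2$-absorbing second and the trichotomy over all pairs; the crux in both parts is the case $ab\psi(N)\subseteq abN$, which is precisely where the special hypotheses are designed to enter. I expect the inclusion chain in (a) to be the most delicate point, since one must keep track of exactly where $N\subseteq t\psi(N)$, $ab\psi(N)\subseteq abN$ and $abN\subseteq N$ are each used and be sure the composite lands in $tN=0$.
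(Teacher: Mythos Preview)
Your proof is correct. For part~(b) your argument and the paper's converge on the same key step: once $ab\psi(tM)\subseteq tM$ one has $ab\in(tM:_R\psi(tM))\subseteq Ann_R(tM)$. The paper reaches this by working directly with an arbitrary $K$ and passing to the auxiliary submodule $tM\cap K$, whereas you get there via the characterization in Theorem~\ref{t2.7}(d); the difference is cosmetic.

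For part~(a), however, your route is genuinely different. The paper argues straight from the definition: given $abN\subseteq K$ with $ab\psi(N)\subseteq K$, it replaces $b$ by $b+t$ (which leaves $a(b+t)N=abN$ since $tN=0$) and splits into cases according to whether $a(b+t)\psi(N)\subseteq K$; only in the last subcase does the hypothesis $N\subseteq t\psi(N)$ enter, yielding $aN\subseteq K$. By contrast, you first recast strongly $2$-absorbing second as the unconditional trichotomy $abN\in\{aN,bN,0\}$ over all pairs $a,b$, so that via Theorem~\ref{t2.7}(d) the only outstanding case is $ab\psi(N)\subseteq abN$, which you dispatch with the single chain $abN\subseteq abt\psi(N)=t\bigl(ab\psi(N)\bigr)\subseteq t(abN)\subseteq tN=0$. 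This is shorter and avoids the $b\mapsto b+t$ substitution entirely; the modest cost is the preliminary check that strongly $2$-absorbing second is equivalent to the unconditional trichotomy, which you verify correctly by taking $K=abN$.
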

  \begin{proof}
  (a) Suppose that $(0:_Mt)$ is a strongly $\psi$-2-absorbing second submodule of $M$,  $a, b \in  R$,    and $K$ is a submodule of $M$ such that $ab(0:_Mt) \subseteq K$. If $ab\psi((0:_Mt)) \not  \subseteq K$,  then since $(0:_Mt)$ is strongly $\psi$-2-absorbing second, we have $a(0:_Mt) \subseteq K$ or $b(0:_Mt) \subseteq K$ or
  $ba \in Ann_R((0:_Mt))$ which implies $(0:_Mt)$ is strongly 2-absorbing second. Therefore we may assume that $ab\psi((0:_Mt)) \subseteq K$.   Clearly,  $a(b+t)(0:_Mt) \subseteq K$.   If $a(b+t)\psi((0:_Mt)) \not \subseteq K$,   then we have $(b+t)(0:_Mt) \subseteq K$ or $a(0:_Mt) \subseteq K$ or $a(b+t)\in Ann_R((0:_Mt))$. Since $at \in Ann_R((0:_Mt))$ therefore $b(0:_Mt) \subseteq K$ or $a(0:_Mt) \subseteq K$ or $ab\in Ann_R((0:_Mt))$. Now suppose that $a(b+t)\psi((0:_Mt)) \subseteq K$. Then since $ab\psi((0:_Mt)) \subseteq K$, we have $ta\psi((0:_Mt)) \subseteq K$ and so $t\psi((0:_Mt)) \subseteq (K:_Ma)$. Now $(0:_Mt) \subseteq t\psi((0:_Mt))$ implies that  $(0:_Mt) \subseteq (K:_Ma)$. Thus $a(0:_Mt) \subseteq K$, as needed. The converse is clear.

  (b) Let $tM$ be a  strongly $\psi$-2-absorbing second submodule of $M$ and assume that $a,b \in R$ and $K$ be a submodule of $M$ with
  $abtM \subseteq K$.  Since $tM$ is  strongly $\psi$-2-absorbing second submodule, we can suppose that $ab\psi(tM) \subseteq K$,   otherwise $tM$ is strongly 2-absorbing second. Now $abtM \subseteq tM \cap K$.  If $ab\psi(tM) \not \subseteq tM \cap K$,  then as $tM$ is  strongly $\psi$-2-absorbing second submodule, we are done. So let $ab\psi(tM) \subseteq tM \cap K$. Then $ab\psi(tM) \subseteq tM$. Thus $(tM:_R\psi(tM)) \subseteq Ann_R(tM)$ implies that $ab \in Ann_R(tM)$, as requested. The converse is clear.
\end{proof}

\end{document}